\newenvironment{Proof of}[1]{\emph{Proof of #1.}}{$\qquad \square$\par}
\DeclareMathOperator{\dashind}{-Ind}
\DeclareMathOperator{\clsp}{\overline{span}}
\newcommand{\h}{\widehat h}
\newcommand{\hal}{\widehat \alpha}
\newcommand{\al}{\alpha}
\newcommand{\FF}{\mathcal F}
\newcommand{\FFF}{\mathbb F}
\newcommand{\II}{\mathbb I}
\newcommand{\OO}{\mathcal O}
\newcommand{\A}{\mathcal A}
\newcommand{\B}{\mathcal B}
\newcommand{\E}{\mathcal E}
\newcommand{\G}{\mathcal G}
\newcommand{\C}{\mathbb C}
\newcommand{\RR}{\mathcal R}
\newcommand{\Z}{\mathbb Z}
\newcommand{\N}{\mathbb N}
\newcommand{\T}{\mathbb T}
 \newtheorem{thm}{Theorem}
 \newtheorem{lem}[thm]{Lemma}
 \newtheorem{prop}[thm]{Proposition}
 \theoremstyle{definition}
 \theoremstyle{remark}
\begin{document}
\title[Uniqueness property for circle action
$C^*$-algebras]{Uniqueness property for $C^*$-algebras given by
  relations with circular symmetry}

\author{B. K.  Kwa\'sniewski} 
 \address{Institute of Mathematics\\ University  of Bia{\l}ystok\\
 ul.~Akademicka 2\\ PL-15-267  Bia{\l}ystok\\ Poland}
 \email{bartoszk@math.uwb.edu.pl}
 \thanks{The work  in part supported by  National Science Centre  grants numbers  DEC-2011/01/D/ST1/04112, DEC-2011/01/B/ST1/03838.}
\begin{abstract}
  A general method of investigation of the uniqueness property for
  $C^*$-algebra equipped with a circle gauge action is discussed. It
  unifies isomorphism theorems for various crossed products and
  Cuntz-Krieger uniqueness theorem for Cuntz-Krieger algebras.
\end{abstract}
\keywords{uniqueness property, topological freeness, Hilbert bimodule,
  crossed product, Cuntz-Krieger algebra}

\subjclass{46L05,46L55}
%\thanks{This work was in part supported by Polish Ministry of Science
% and High Education grant number N N201 382634}  
   
\maketitle

\section{Introduction}

The origins of $C^*$-theory and particularly the theory of universal
$C^*$-algebras generated by operators that satisfy prescribed
relations go back to the work of W. Heisenberg, M. Bohr and P. Jordan
on matrix formulation of quantum mechanics, and among the most
stimulating examples are algebras generated by anti-commutation
relations and canonical commutation relations (in the Weyl form). The
great advantage of relations of CAR and CCR type is \emph{uniqueness
  of representation}. Namely, due to the celebrated Slawny's theorem,
see e.g. \cite{EL}, the $C^*$-algebras generated by such relations are
defined uniquely up to isomorphisms preserving the generators and
relations. This \emph{uniqueness property} is not only a strong
mathematical tool but also has a significant physical meaning -- if we
had no such uniqueness, \emph{different representations would yield
  different physics}.

The aim of the present note is to advertise a program of developing a
general approach to investigation of uniqueness property and related
problems based on exploring the symmetries of relations.  We focus
here, as a first attempt, on circular symmetries and propose a
two-step method of investigation universal $C^*$-algebra $C^*(\G,\RR)$
generated by a set of generators $\G$ subject to relations $\RR$ which
could be schematically presented as follows:
$$ \xymatrix{\underset{\text{relations,   circle
      action}}{(\mathcal{G},\mathcal{R}, \{\gamma_\lambda\}_{\lambda
      \in \T})} \ar[r]^{\text{step }1} &\underset{ \substack{\text{Hilbert
      bimodule}\\ \text{(reversible dynamics)}}}{(\B_0,\B_1)}
  \ar[r]^{\text{step }2\,\,\,\,}  &\underset{ \text{universal
    }C^*\text{-algebra}}{C^*(\mathcal{G},\mathcal{R})=\B_0\rtimes_{\B_1}
    \Z} %\\   \,\,  & \,\,\,\ &  \,\,  
}
$$
-- we fix a circle gauge action $\gamma=\{\gamma\}_{\lambda \in \T}$
on $C^*(\mathcal{G},\mathcal{R})$ which is induced by a circular
symmetry in $(\mathcal{G},\mathcal{R})$; in the first step we
associate to $\gamma$ a non-commutative reversible dynamical system
which is realized via a Hilbert bimodule $(\B_0,\B_1)$, and in the
second  step we use this system to determine the uniqueness property
for $C^*(\mathcal{G},\mathcal{R})$.

\section{Uniqueness property, universal $C^*$-algebras and gauge
  actions}

Suppose we are given an abstract set of generators $\G$ and a set of
$^*$-algebraic relations $\RR$ that we want to impose on
$\G$. Formally $\G$ is a set and $\RR$ is a set consisting of certain
$^*$-algebraic relations in a free non-unital $^*$-algebra $\FFF$
generated by $\G$.  By a \emph{representation} of the pair $(\G,\RR)$
we mean a set of bounded operators $ \pi=\{\pi(g)\}_{g\in \G}\subset
L(H)$ on a Hilbert space $H$ satisfying the relations $\RR$, and
denote by $C^*(\pi)$ the $C^*$-algebra generated by $\pi(g)$, $g\in
\G$.  At this very beginning one faces the following two fundamental
problems:
\begin{itemize}
\item[ 1.]  (\emph{non-degeneracy problem}) Do there exists a
  \emph{faithful representation} of $(\G,\RR)$, i.e. a representation
  $\{\pi(g)\}_{g\in \G}$ of $(\G,\RR)$ such that $\pi(g)\neq 0$ for
  all $g\in \G$?
\item[2.]  (\emph{uniqueness problem}) If one has two different
  faithful representation of $(\G,\RR)$, do they generate isomorphic
  $C^*$-algebras? More precisely, does for any two faithful
  representations $\pi_1$, $\pi_2$ of $(\G,\RR)$ the mapping
$$
\pi_1(g) \longmapsto \pi_2(g), \qquad g\in \G,
$$
extends to the (necessarily unique) isomorphism $C^*(\pi_1)\cong
C^*(\pi_2)$?
  
\end{itemize}
The first problem is important and interesting in its own rights, see
\cite{kwa-leb1}, \cite{kwa-doplicher}, however here  we would like  to focus on the second problem and thus 
throughout we   assume that all the pairs $(\G,\RR)$ under consideration are
non-degenerate. We say that
$(\G,\RR)$ possess \emph{uniqueness property} if the answer to
question 2 is positive.

Any representation $\pi$ of $(\G,\RR)$ extends uniquely to a
$^*$-homomorphism, also denoted by $\pi$, from $\FFF$ into $L(H)$.
The pair $(\G,\RR)$ is said to be \emph{admissible} if the function
$|||\cdot|||:\FFF\to [0,\infty]$ given by
$$
|||w|||=\sup\{\|\pi(w)\|: \pi \text{ is a representation of } (\G,\RR)\}
$$
is finite. Plainly, admissibility is a necessary condition for
uniqueness property and therefore we make it our another standing
assumption. Then the function $|||\cdot|||:\FFF\to [0,\infty)$ is a
$C^*$-seminorm on $\FFF$ and its kernel
$$
\II:=\{w\in \FFF: |||w|||=0\}
$$
is a self-adjoint ideal in $\FFF$ -- it is the smallest self-adjoint
ideal in $\FFF$ such that the relations $\RR$ become valid in the
quotient $\FFF/\II$.  We
put% denote the completion of $\FFF/\II$ under $|||\cdot|||$ by $C^*(\G,\RR)$:
$$
 C^*(\G,\RR) := \overline{\FFF/\II}^{|||\cdot|||}
$$ 
and call it a \emph{universal $C^*$-algebra} generated by $\G$ subject
to relations $\RR$, cf. \cite{blackadar}. $C^*$-algebra $C^*(\G,\RR)$
is characterized by the property that any representation of $(\G,\RR)$
extends uniquely to a representation of $C^*(\G,\RR)$ and all
representations of $C^*(\G,\RR)$ arise in that manner.  In particular,
$(\G,\RR)$ possess uniqueness property if and only if any faithful
representation of $(\G,\RR)$ extends to a faithful representation of
$C^*(\G,\RR)$.

\section{Gauge actions -- exploring the symmetries in the relations}
We would like to identify the uniqueness property of $(\G,\RR)$ by
looking at the symmetries in $(\G,\RR)$. In order to formalize this we use
a natural torus action $\{\gamma_{\lambda}\}_{\lambda\in\T^\G}$ on
$\FFF$ determined by the formula
$$
\gamma_\lambda(g)=\lambda_g\, g,\qquad \text{ for }\,\, g\in \G
\,\,\text{ and }\,\, \lambda=\{\lambda_h\}_{h\in \G}\in \T^\G
$$
where $\T=\{z\in \C:|z|=1\}$ is a unit circle.  A closed subgroup
$G\subset \T^\G$ may be considered as a \emph{group of symmetries in
  the pair $(\G,\RR)$} if the restricted action
$\gamma=\{\gamma_{\lambda}\}_{\lambda\in G}$ leaves invariant the
ideal $\II$. Any such group gives rise to a point-wisely continuous
group action on $C^*(\G,\RR)$ and actions that arise in that manner
are called \emph{gauge actions}.
\\
Let us from now on consider the case when $G\cong \T$, that is we have
a circle gauge action $\gamma=\{\gamma_{\lambda}\}_{\lambda\in \T}$ on
$C^*(\G,\RR)$. Then for  each $n\in \Z$ the formula
$$
\E_n(b) :=\int_{\T}\gamma_\lambda(b)\lambda^{-n} \, d\lambda  
$$ 
defines a projection $\E_n:C^*(\G,\RR)\to C^*(\G,\RR)$, called
\emph{$n$-th spectral projection}, onto the subspace
$$
\B_n:=\{b\in C^*(\G,\RR): \gamma_\lambda(b)=\lambda^n b\} 
$$
called \emph{$n$-th spectral subspace} for $\gamma$,
cf. e.g. \cite{exel1}. Spectral subspaces specify a $\Z$-gradation on
$C^*(\G,\RR)$. Namely, $\bigoplus_{n\in\Z}\B_n$ is dense in
$C^*(\G,\RR)$, and
\begin{equation}\label{z-gradation relations}
\B_n\B_m\subset \B_{n+m},\,\,\, \B_n^*=\B_{-n} \,\,\text{ for all }\,\, n,m \in \Z.
\end{equation} 
In particular, $\B_0$ is a $C^*$-algebra -- the fixed point algebra
for $\gamma$, and $\E_0:\B\to \B_0$ is a conditional expectation.  A
circle action on a $C^*$-algebra $\B$ is called \emph{semi-saturated}
\cite{exel1} if $\B$ is generated as a $C^*$-algebra by its first and
zeroth spectral subspaces. We note that  every continuous group endomorphism
of $\T$ is of the form $\lambda\mapsto \lambda^n$, for certain
$n\in\Z$, and hence  it follows that $\G\subset \cup_{n\in \Z}\B_n$. In
particular, we have

\begin{lem} The circle gauge action
  $\gamma=\{\gamma_{\lambda}\}_{\lambda\in \T}$ on $C^*(\G,\RR)$ is
  semi-saturated, that is $C^*(\G,\RR)=C^*(\B_0,\B_1)$ if and only if
  $\G=\G_0\cup \G_1$ for some disjoint sets $\G_0$, $\G_1$ and $
  \gamma_\lambda(g_0)= g_0$, $\gamma_\lambda(g_1)= \lambda g_1$, for
  all $g_i\in \G_i$.
\end{lem}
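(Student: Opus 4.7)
The argument is driven by the observation made immediately before the lemma: since every continuous homomorphism $\T \to \T$ is of the form $\lambda \mapsto \lambda^{n}$ for some $n \in \Z$, each generator $g \in \G$ is an eigenvector for $\gamma$ and lies in a unique spectral subspace $\B_{n(g)}$. I would therefore open the proof by recording the canonical disjoint decomposition $\G = \bigsqcup_{n \in \Z} \G_{n}$, where $\G_{n} := \G \cap \B_{n}$, and then handle each implication separately in terms of this decomposition.

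For the direction ``$\Leftarrow$'' the plan is essentially mechanical. Given $\G = \G_{0} \cup \G_{1}$ with the stated gauge behaviour, one has $\G_{0} \subset \B_{0}$ and $\G_{1} \subset \B_{1}$, hence $\G \subset C^{*}(\B_{0}, \B_{1})$. Since $\G$ generates $C^{*}(\G, \RR)$ as a $C^{*}$-algebra, this yields $C^{*}(\G, \RR) \subseteq C^{*}(\B_{0}, \B_{1})$. The reverse inclusion is automatic from $\B_{0}, \B_{1} \subset C^{*}(\G, \RR)$, and semi-saturation follows.

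For the direction ``$\Rightarrow$'', assuming semi-saturation, the natural candidates are $\G_{0} := \G \cap \B_{0}$ and $\G_{1} := \G \cap \B_{1}$: on these sets the required gauge identities $\gamma_{\lambda}(g_{i}) = \lambda^{i} g_{i}$ are built in, so the decomposition $\G = \G_{0} \cup \G_{1}$ holds as soon as one knows $\G_{n} = \emptyset$ for every $n \notin \{0, 1\}$. This emptiness statement -- ruling out any generator in higher spectral subspaces -- is what I expect to be the main obstacle. My plan is to combine the $\Z$-gradation \eqref{z-gradation relations} with the spectral projections $\E_{n}$: under semi-saturation each subspace $\B_{n}$, $n \ge 1$, is a closed linear span of $n$-fold products from $\B_{1}$ with $\B_{0}$-coefficients (and dually for $n \le -1$ using $\B_{1}^{*}$), so any generator residing in $\B_{n}$ with $|n| \ge 2$ would already be expressible within $C^{*}(\G_{0} \cup \G_{1})$. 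Formalising this expressibility so that the generator can be reabsorbed into the presentation, leaving $\G = \G_{0} \cup \G_{1}$ as required, is the technical heart of the argument.
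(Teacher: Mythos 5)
Your ``$\Leftarrow$'' direction is correct and is essentially all the paper itself offers: the Lemma is stated with no separate proof, as an ``in particular'' following the observation that every continuous endomorphism of $\T$ has the form $\lambda\mapsto\lambda^n$ and hence $\G\subset\bigcup_{n\in\Z}\B_n$. The only direction ever invoked later (to verify semi-saturation for the crossed products and for $\OO_A$) is precisely this easy one, and your canonical decomposition $\G=\bigsqcup_{n}\G_n$ with $\G_n=\G\cap\B_n$ is the right way to organize it.

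The ``$\Rightarrow$'' direction is where the genuine gap lies, and you have correctly located it, but your proposed repair cannot work. Showing that a generator $g\in\G_n$ with $|n|\ge 2$ is \emph{expressible} inside $C^*(\B_0,\B_1)$ and then ``reabsorbing it into the presentation'' proves at best that $C^*(\G,\RR)$ admits \emph{some other} generating set contained in $\B_0\cup\B_1$; it does not prove the asserted identity $\G=\G_0\cup\G_1$ for the \emph{given} $\G$, which is what the Lemma literally claims. In fact that literal ``only if'' statement is false: take $\G=\{S_1,S_2,T\}$ with the Cuntz relations on $S_1,S_2$ together with the extra relation $T=S_1S_2$, and the gauge action $\gamma_\lambda(S_i)=\lambda S_i$, $\gamma_\lambda(T)=\lambda^2 T$ (this circle action preserves $\II$ because $T-S_1S_2$ is homogeneous of degree $2$, and the pair is non-degenerate since $S_1S_2\neq 0$ in $\OO_2$). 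Then $C^*(\G,\RR)\cong\OO_2$ is already generated by $S_1,S_2\in\B_1$, so $\gamma$ is semi-saturated, yet $T\in\G\cap\B_2\neq\emptyset$. Consequently no argument will close your gap; the honest options are either to prove only the ``if'' direction (which suffices for everything that follows in the paper) or to reformulate the ``only if'' direction as the existence of \emph{a} generating subset of $\B_0\cup\B_1$ --- for instance via Exel's consequence of semi-saturation that $\B_n=\clsp\B_1^n$ for $n\ge 1$ --- which is the statement your ``reabsorption'' strategy actually targets.
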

We introduce an important necessary condition for $(\G,\RR)$ to
possess uniqueness property.
\begin{prop}\label{gauge uniqueness theorem} The following conditions
  are equivalent:
  \begin{itemize}
  \item[i)] each faithful representation of $(\G,\RR)$ give rise to a
    faithful representation of the fixed-point algebra $\B_0$.
  \item[ii)] each faithful representation $\pi$ of $(\G,\RR)$ give
    rise to a faithful representation of $C^*(\G,\RR)$ if and only if
    there is a circle action $\beta$ on $C^*(\pi)$ such that
$$
\beta_z(\pi(g))=\pi(\gamma_z(g)), \qquad g\in \G.
$$
\end{itemize}
\end{prop}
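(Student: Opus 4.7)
My plan is to carry out the standard gauge-invariant uniqueness theorem argument in this abstract framework. The key analytic fact is that the conditional expectation $\E_0=\int_\T \gamma_\lambda\,d\lambda:C^*(\G,\RR)\to\B_0$ is faithful on positive elements: if $\E_0(a^*a)=0$, the continuous path $\lambda\mapsto \gamma_\lambda(a^*a)\geq 0$ has vanishing Haar integral, so it is identically zero, forcing $a=0$. For any circle action $\beta$ on a $C^*$-algebra I will write $\E_0^\beta$ for the analogous expectation onto its fixed-point subalgebra.

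For (i)$\Rightarrow$(ii), one direction of the biconditional is immediate: if $\pi$ is a faithful representation of $(\G,\RR)$ whose extension is faithful on all of $C^*(\G,\RR)$, then this extension is an isomorphism onto $C^*(\pi)$ and I can simply transport $\gamma$, setting $\beta_z:=\pi\circ\gamma_z\circ\pi^{-1}$. For the reverse I would start from the hypothesis $\beta_\lambda(\pi(g))=\pi(\gamma_\lambda(g))$ on generators; since $\pi$, $\gamma_\lambda$, and $\beta_\lambda$ are $^*$-homomorphisms and all three are norm-continuous, the intertwining propagates to all of $C^*(\G,\RR)$, and averaging over $\lambda\in\T$ yields $\pi\circ\E_0=\E_0^\beta\circ\pi$. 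Then for $a\in\ker\pi$ I would compute $\pi(\E_0(a^*a))=\E_0^\beta(0)=0$, note $\E_0(a^*a)\in\B_0$, apply (i) to deduce $\E_0(a^*a)=0$, and conclude $a=0$ from faithfulness of $\E_0$ on positive elements.

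For (ii)$\Rightarrow$(i), the crucial step is to manufacture, from an arbitrary faithful representation $\pi$ of $(\G,\RR)$ on $H$, an auxiliary faithful representation that automatically carries a gauge action. I would dilate $\pi$ to $\pi^\gamma:C^*(\G,\RR)\to L(L^2(\T,H))$ defined by $(\pi^\gamma(a)\xi)(z)=\pi(\gamma_z(a))\xi(z)$ and pair it with the translation unitaries $(V_w\xi)(z)=\xi(wz)$. A direct computation using commutativity of $\T$ yields $V_w\,\pi^\gamma(a)\,V_w^*=\pi^\gamma(\gamma_w(a))$, so conjugation by $V_w$ defines a circle action on $C^*(\pi^\gamma)$ with the required intertwining property. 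Since $\G\subset\bigcup_{n\in\Z}\B_n$, each $g\in\G$ satisfies $\gamma_z(g)=z^{n_g}g$, whence $\pi^\gamma(g)\neq 0$ iff $\pi(g)\neq 0$; in particular $\pi^\gamma$ is faithful on $(\G,\RR)$. Applying (ii) to $\pi^\gamma$ I conclude $\pi^\gamma$ is faithful on all of $C^*(\G,\RR)$, and hence on $\B_0$. But for $b\in\B_0$ one has $\pi^\gamma(b)=\pi(b)\otimes 1$, so $\pi|_{\B_0}$ itself is faithful, which is (i).

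The only non-routine ingredient I anticipate is the dilation construction used in (ii)$\Rightarrow$(i); everything else is a formal manipulation around faithfulness of the expectation $\E_0$.
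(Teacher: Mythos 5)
Your proof is correct, but the second implication follows a genuinely different route from the paper's. For i) $\Rightarrow$ ii) you simply write out the standard gauge-invariance uniqueness argument (faithfulness of $\E_0$ on positive elements, the intertwining $\pi\circ\E_0=\E_0^\beta\circ\pi$, and the kernel computation), which is exactly the content the paper delegates to the citations \cite[2.9]{exel1} and \cite[4.2]{dr}; same proof, just unfolded. For ii) $\Rightarrow$ i), however, the paper argues by contraposition: starting from a faithful representation $\pi$ of $(\G,\RR)$ that is \emph{not} faithful on $\B_0$, it forms the $\Z$-graded algebra $\bigoplus_{n}\pi(\B_n)$ and invokes \cite[4.2]{dr} to complete it to a $C^*$-algebra carrying the circle action induced by the grading, thereby exhibiting a gauge-compatible faithful representation of $(\G,\RR)$ that fails to be faithful on $C^*(\G,\RR)$ and so refutes ii). You instead give a direct proof: from an arbitrary faithful $\pi$ you build the dilation $\pi^\gamma=\int_\T^{\oplus}\pi\circ\gamma_z\,dz$ on $L^2(\T,H)$, observe that conjugation by the translation unitaries $V_w$ supplies the required circle action, apply ii) to conclude $\pi^\gamma$ is faithful on $C^*(\G,\RR)$, and descend to $\pi|_{\B_0}$ because $\pi^\gamma$ restricted to $\B_0$ is just an amplification of $\pi|_{\B_0}$ (here your use of $\G\subset\bigcup_n\B_n$ to see that $\pi^\gamma$ is still nonzero on each generator is the right check). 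Your construction is more elementary and self-contained -- it avoids the Doplicher--Roberts existence result for $C^*$-norms on graded algebras and stays entirely spatial -- whereas the paper's contrapositive is shorter given that citation and has the side benefit of explicitly producing the counterexample representation witnessing the failure of gauge-invariance uniqueness. Both arguments are sound.
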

\begin{proof}
  i) $\Longrightarrow$ ii). It suffices to apply the gauge invariance
  uniqueness for circle actions, see e.g.  \cite[2.9]{exel1} or
  \cite[4.2]{dr}. ii) $\Longrightarrow$ i). Assume that $\pi$ is a
  faithful representation of $(\G,\RR)$ such that its extension is not
  faithful on $\B_0$.  The spaces $\{\pi(\B_n)\}_{n\in \Z}$ form a
  $\Z$-graded $C^*$-algebra and thus by \cite[4.2]{dr}, there is a
  (unique) $C^*$-norm $\|\cdot\|_\beta$ on $\bigoplus_{n\in
    \Z}\pi(\B_n)$ such that the circle action $\beta$ on
  $\bigoplus_{n\in \Z}\pi(\B_n)$ established by gradation extends onto
  the $C^*$-algebra $\B=\overline{\bigoplus_{n\in
      \Z}\pi(\B_n)}^{||\cdot||_\beta}$. Composing $\pi$ with the
  embedding $\bigoplus_{n\in \Z}\pi(\B_n)\subset \B$ one gets a
  faithful representation $\pi'$ of $(\G,\RR)$ which is
  gauge-invariant but not faithful on $C^*(\G,\RR)$.
\end{proof}
In the literature the statements showing that the condition ii) in
Proposition \ref{gauge uniqueness theorem} holds are often
called \emph{gauge-invariance uniqueness theorems} and therefore we shall
say that the triple $(\G,\RR, \gamma)$ has the \emph{gauge-invariance
  uniqueness property} if each faithful representation of $(\G,\RR)$
give rise to a faithful representation of the fixed-point algebra
$\B_0$. In particular, this always holds for triples $(\G,\RR,
\gamma)$ such that $C^*(\G,\RR)$ can be %naturally
modeled as relative Cuntz-Pimsner algebra, see \cite[Sect. 9]{kwa-doplicher}
and sources cited there.

\section{From relations to Hilbert bimodules}
Let us fix a pair $(\G,\RR)$ with a circle gauge action
$\gamma=\{\gamma_{\lambda}\}_{\lambda\in \T}$.
It follows from \eqref{z-gradation relations} that $\B_1$ can be
naturally viewed as a \emph{Hilbert bimodule} over $\B_0$, in the
sense introduced in \cite[1.8]{BMS}.  Namely, $\B_1$ is a
$\B_0$-bimodule with bimodule operations inherited from $C^*(\G,\RR)$
and additionally is equipped with two $\B_0$-valued inner products
$$
 \langle a,b\rangle_{R}:=a^*b, \qquad  {_{L}\langle a,b\rangle}:=ab^*  
$$
that satisfy the so-called imprimitivtiy condition: $ a\cdot \langle
b,c\rangle_{R}= {_{L}\langle a,b\rangle}\cdot c = ab^*c, $ for all
$a,b,c\in \B_1$. %
Thus we can consider \emph{crossed product} $\B_1\rtimes_{\B_0} \Z$ of
$\B_0$ by \emph{the Hilbert bimodule} $\B_1$ constructed in
\cite{aee}, which could be alternatively defined as the universal
$C^*$-algebra:
 $$
 \B_1\rtimes_{\B_0} \Z =C^*(\G_\gamma, \RR_\gamma)
 $$
 where $\G_\gamma=\B_0\cup \B_1$ and $\RR_{\gamma}$ consists of all
 algebraic relations in the Hilbert bimodule $(\B_0,\B_1)$.
 \begin{prop}\label{takie tam prr} We have a natural embedding $
   \B_1\rtimes_{\B_0} \Z\hookrightarrow C^*(\G,\RR) $ which is an
   isomorphism if and only if $\gamma$ is semi-saturated.  Moreover,
   if $\gamma$ is semi-saturated, then the following conditions are
   equivalent:
   \begin{itemize}
   \item[i)] $(\G,\RR)$ possess uniqueness property
   \item[ii)] $(\G,\RR,\gamma)$ has gauge-invariance uniqueness
     property and $(\G_\gamma,\RR_\gamma)$ possess uniqueness property
   \end{itemize}
 \end{prop}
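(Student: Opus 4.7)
The plan is to construct the embedding via the universal property of the Hilbert bimodule crossed product, and then reduce the uniqueness questions to a factorization through $C^*(\G_\gamma,\RR_\gamma)$. The inclusions $\B_0,\B_1\subset C^*(\G,\RR)$ satisfy every algebraic relation in $\RR_\gamma$ tautologically, so by the universal property of $C^*(\G_\gamma,\RR_\gamma)=\B_1\rtimes_{\B_0}\Z$ they induce a $^*$-homomorphism $\Phi:\B_1\rtimes_{\B_0}\Z\to C^*(\G,\RR)$. Its image is $C^*(\B_0,\B_1)$, which coincides with $C^*(\G,\RR)$ precisely when $\gamma$ is semi-saturated, so once $\Phi$ is shown to be injective the ``isomorphism iff'' claim is immediate. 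For injectivity I would invoke gauge-invariance: the crossed product carries a canonical circle action $\widetilde\gamma$ (from the $\Z$-gradation built into $\RR_\gamma$) whose fixed-point algebra is $\B_0$, and $\Phi$ intertwines $\widetilde\gamma$ with $\gamma$ while being the identity on $\B_0$; the gauge-invariance uniqueness theorem \cite[2.9]{exel1} then forces $\Phi$ to be injective.

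Assume now that $\gamma$ is semi-saturated, so $\Phi$ is an isomorphism identifying the two circle actions and their fixed-point algebras. For (i)$\Rightarrow$(ii), the uniqueness property of $(\G,\RR)$ forces every faithful representation of $(\G,\RR)$ to extend to an injective homomorphism of $C^*(\G,\RR)$, in particular to be faithful on $\B_0$, which is precisely the gauge-invariance uniqueness property. Moreover, any faithful representation $\rho$ of $(\G_\gamma,\RR_\gamma)$ restricts, via the inclusion $\G\subset\B_0\cup\B_1=\G_\gamma$ supplied by the preceding Lemma, to a faithful representation of $(\G,\RR)$; condition (i) then extends it to a faithful representation of $C^*(\G,\RR)\cong C^*(\G_\gamma,\RR_\gamma)$, which recovers $\rho$ on the generating set by uniqueness and density, so $(\G_\gamma,\RR_\gamma)$ inherits the uniqueness property.

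For (ii)$\Rightarrow$(i), take a faithful representation $\pi$ of $(\G,\RR)$. Gauge-invariance uniqueness gives that $\pi$ is injective on $\B_0$. If $b\in\B_1$ satisfies $\pi(b)=0$, then $\pi(b^*b)=0$ with $b^*b\in\B_1^*\B_1\subset\B_0$ by \eqref{z-gradation relations}, so $b^*b=0$ and hence $b=0$; consequently $\pi$ is injective on $\B_0\cup\B_1=\G_\gamma$, i.e.\ it is a faithful representation of $(\G_\gamma,\RR_\gamma)$. The uniqueness property of $(\G_\gamma,\RR_\gamma)$ then produces an injective extension to $C^*(\G_\gamma,\RR_\gamma)\cong C^*(\G,\RR)$, which must coincide with the extension of $\pi$ already defined on $\G$, so $(\G,\RR)$ has the uniqueness property.

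The only substantial ingredient is the gauge-invariance uniqueness theorem used to establish injectivity of $\Phi$ in the first step; everything else reduces to careful bookkeeping between the original generating set $\G$ and the enriched set $\G_\gamma=\B_0\cup\B_1$, together with the single application of the imprimitivity-type inclusion $\B_1^*\B_1\subset\B_0$ needed to transfer faithfulness from $\B_0$ to $\B_1$.
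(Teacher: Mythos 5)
Your proof is correct and follows essentially the same route as the paper: injectivity of the natural map $\B_1\rtimes_{\B_0}\Z\to C^*(\G,\RR)$ is obtained from gauge-invariance plus faithfulness on $\B_0$ via \cite[2.9]{exel1}, and the equivalence of i) and ii) is the same bookkeeping the paper dismisses as ``clear'' in view of Proposition \ref{gauge uniqueness theorem}. Your write-up merely supplies the details (the $\B_1^*\B_1\subset\B_0$ step and the identification of representations of $(\G_\gamma,\RR_\gamma)$ with those of $C^*(\G,\RR)$) that the paper leaves implicit.
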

 \begin{proof}
   Since the homomorphism $ \B_1\rtimes_{\B_0} \Z\mapsto C^*(\G,\RR) $
   is gauge-invariant and injective on $\B_0$ it is injective onto the
   whole $ \B_1\rtimes_{\B_0} \Z$ by \cite[2.9]{exel1}. The rest, in view of
   Proposition \ref{gauge uniqueness theorem}, is clear.
\end{proof}
The Hilbert bimodule $(\B_0,\B_1)$ is an imprimitivity bimodule
(called also Morita-Rieffel equivalence bimodule), see \cite{morita},
if and only if $\overline{\B_1^*\B_1}= \B_0$ and
$\overline{\B_1\B_1^*}=\B_0$. In general, $\overline{\B_1^*\B_1}$ and
$\overline{\B_1\B_1^*}$ are non-trivial ideals in $\B_0$ and we may
treat $\B_1$ as a
$\overline{\B_1\B_1^*}-\overline{\B_1^*\B_1}$-imprimitivity
bimodule. This means, cf. \cite[Cor. 3.33]{morita}, that the induced
representation functor
$$
 \h= \B_1\dashind
$$
is a homeomorphism $\h:\overline{\B_1^*\B_1}\to \overline{\B_1\B_1^*}$
between the spectra of $\overline{\B_1^*\B_1}$ and
$\overline{\B_1\B_1^*}$. Treating these spectra %
as open subsets of the spectrum $\widehat{\B}_0$ of $\B_0$ we may
treat $\h$ as a partial homeomorphism of $\widehat{\B}_0$.  We shall
say that $(\widehat{\B},\h)$ is a \emph{partial dynamical system dual
  to the bimodule} $(\B_0,\B_1)$. Partial homeomorphism $\h$ is said
to be \emph{topologically free} if for each $n\in N$ the set of points
in $\widehat{\B_0}$ for which the equality $\h^n(x)=x$ (makes sense
and) holds has empty interior.
\begin{thm}[main result]\label{main result}
  Suppose that the partial homeomorphism $\h=B_1\dashind$ is
  topologically free.  Then the pair $(\G_\gamma,\RR_\gamma)$ possess
  uniqueness property and in particular
\begin{itemize}
\item[i)] if $(\G,\RR,\gamma)$ possess gauge-invariance uniqueness
  property, then any faithful representation of $(\G,\RR)$ extends to
  the faithful representation of $ \B_1\rtimes_{\B_0} \Z\subset
  C^*(\G,\RR)$.
\item[ii)] if $\gamma$ is semi-saturated and $(\G,\RR,\gamma)$ possess
  gauge-invariance uniqueness property, then $(\G,\RR)$ possess
  uniqueness property.
\end{itemize}
\end{thm}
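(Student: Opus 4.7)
The plan is to establish first the central claim that $(\G_\gamma,\RR_\gamma)$ possesses the uniqueness property; once that is in hand, both (i) and (ii) follow almost immediately by combining it with Propositions \ref{gauge uniqueness theorem} and \ref{takie tam prr}.

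For the central claim, a faithful representation $\pi$ of $(\G_\gamma,\RR_\gamma)$ is a covariant $^*$-representation of the Hilbert bimodule $(\B_0,\B_1)$ that is injective on $\B_0$ (injectivity on $\B_1$ being automatic from the inner product $\langle a,b\rangle_R=a^*b$). By the universal property of $\B_1\rtimes_{\B_0}\Z$ it extends to a $^*$-homomorphism $\tpi:\B_1\rtimes_{\B_0}\Z\to L(H)$, and the task reduces to showing $\ker\tpi=0$. I would achieve this via an \emph{ideal intersection property}: every nonzero closed two-sided ideal $J\subset\B_1\rtimes_{\B_0}\Z$ satisfies $J\cap \B_0\neq 0$. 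Since $\tpi$ is by hypothesis injective on $\B_0$, this forces $\ker\tpi=0$, and then any two faithful representations of $(\G_\gamma,\RR_\gamma)$ necessarily generate isomorphic $C^*$-algebras.

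The ideal intersection property is the Cuntz--Krieger-type statement that I would prove via a topological-freeness argument, exploiting the canonical $\Z$-grading on $\B_1\rtimes_{\B_0}\Z$ and the conditional expectation $\E_0$ onto $\B_0$. For a nonzero positive element $x\in J$ one approximates $x$ by a finite sum $\sum_{|n|\le N}x_n$, with $x_n\in\B_n$, and one reduces (by multiplying on both sides by a suitable element of $\B_0$) to the case where $\E_0(x)=x_0$ is a nonzero positive element of $\B_0$ localized near some $\varphi\in\widehat{\B}_0$. Using that every power $\h^n$ of the dual partial homeomorphism is topologically free for $n\neq 0$, one picks $\varphi$ so that no $\h^n$ fixes $\varphi$ and constructs a positive contraction $c\in\B_0$ supported in a small neighborhood of $\varphi$ such that $cx_nc$ is norm-small for each $n\neq 0$ while $\|cx_0c\|$ remains comparable to $\|x_0\|$. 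The element $cxc\in J$ is thus arbitrarily close to a nonzero positive element of $\B_0$, and passing to a limit produces an element of $J\cap\B_0\setminus\{0\}$. This is essentially the Kishimoto--Olesen--Pedersen/Archbold--Spielberg/Exel-type averaging argument transferred to the Hilbert bimodule setting, in the spirit of \cite{exel1}.

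Granting uniqueness for $(\G_\gamma,\RR_\gamma)$, item (i) is immediate: under the gauge-invariance assumption every faithful representation $\pi$ of $(\G,\RR)$ is injective on $\B_0$, hence restricts to a faithful representation of $(\G_\gamma,\RR_\gamma)$; uniqueness for the latter makes $\pi$ extend faithfully to $\B_1\rtimes_{\B_0}\Z$, which sits inside $C^*(\G,\RR)$ by Proposition \ref{takie tam prr}. Item (ii) follows by combining (i) with the identification $\B_1\rtimes_{\B_0}\Z=C^*(\G,\RR)$ in the semi-saturated case, again from Proposition \ref{takie tam prr}. The main obstacle is the ideal-intersection step: producing the cutting element $c\in\B_0$ that simultaneously dampens every off-diagonal spectral component $x_n$, $n\neq 0$, while preserving the magnitude of $\E_0(x)$. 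It is precisely here that the topological freeness of every power $\h^n$ is used in an essential way; the remainder of the argument is then formal.
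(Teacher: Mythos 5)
Your deductions of (i) and (ii) from the central claim coincide exactly with the paper's: the paper proves the theorem in one line by invoking Proposition \ref{takie tam prr} (together with Proposition \ref{gauge uniqueness theorem}) and citing the main result of \cite{kwa} for the statement that topological freeness of $\h$ gives the uniqueness property for $(\G_\gamma,\RR_\gamma)$. The difference is that where the paper outsources that central claim to \cite{kwa}, you sketch a direct proof of it via the ideal intersection property and an Archbold--Spielberg/Exel-type averaging argument. Your sketch follows the right strategy --- and it is in fact essentially the argument carried out in the cited preprint --- but be aware that everything you call ``formal'' at the end is the easy part, and the one step you defer is the entire technical content of the theorem: one must (a) justify that $\E_0$ is faithful on positive elements of $\B_1\rtimes_{\B_0}\Z$ so that $\E_0(x)\neq 0$ for $0\neq x\in J$ positive (this is where the topological grading of \cite{exel1}/\cite{dr} enters); (b) handle the fact that $\h$ is only a \emph{partial} homeomorphism, so ``no power of $\h$ fixes $\varphi$'' must be read as ``$\h^n(\varphi)$ either is undefined or differs from $\varphi$,'' and the density of such $\varphi$ requires the finite union of fixed-point sets to have empty interior; and (c) prove the key localization estimate that $\|c x_n c\|$ is small for $n\neq 0$ when $c$ is supported in a neighborhood $U$ with $\h^n(U)\cap U=\emptyset$, which rests on identifying how elements of $\B_n$ intertwine representations in $\widehat{\B_0}$ with their images under $\h^n$ via the imprimitivity-bimodule structure. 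None of these is routine to write out, but all are known and are exactly what \cite{kwa} supplies, so your route is sound; it simply replaces the paper's citation with a (correct but incomplete) outline of that citation's proof.
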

\begin{proof}
  Apply the main result of \cite{kwa} and Proposition \ref{takie tam
    prr}.
\end{proof}

\section{Applications to crossed products and Cuntz-Krieger algebras}

We show that our main result is a generalization of the so called
isomorphisms theorem for crossed products by automorphisms (see, for
instance, \cite[pp. 225, 226]{Anton_Lebed} for a brief survey of such
results) by applying it to a  crossed  product by an endomorphism which is
considered to be one of the most successful constructions of this
sort, see \cite{Ant-Bakht-Leb} and sources cited there. In particular,
we shall use this crossed product to identify the uniqueness property
for Cuntz-Krieger algebras.

\subsection{Crossed products by monomorphisms with hereditary range} %
Let $\al:\A\to\A$ be a monomorphism of a unital $C^*$-algebra $\A$.
Let $\G=\A\cup \{S\}$ and let $\RR$ consists of all $^*$-algebraic
relations in $\A$ plus the covariance relations
\begin{equation}\label{covariance conditions for monomorphism}
  S a S^*= \al(a), \qquad S^* S =1, \qquad  a\in \A.
\end{equation}
Then $C^*(\G,\RR)\cong \A\rtimes_\al \N$ is the crossed product of
$\A$ by $\al$, which is equipped with a semi-saturated circle gauge
action: $ \gamma_\lambda(a) =a$, $\gamma_\lambda(S)=\lambda S$, $a\in
\A$.  Let us additionally assume that $\al(\A)$ is a hereditary
subalgebra of $\A$. This is equivalent to $\al(\A)=\al(1)\A\al(1)$.
Then  we have $S^*\A S\subset \A$  since for any $a\in \A$
there is $b\in \A$ such that $\al(b)=\al(1)a \al(1)$ and therefore
$$
  S^* aS =S^*\al(1)a \al(1) S= S^* \al(b) S=S^*S b S^*S=b\in \A.
$$ 
Hence on one hand $\A=\B_0$ is the fixed point algebra for $\gamma$ and $\B_1=
\B_0S$ is the first spectral subspace.  On the other hand the spectrum
of the hereditary subalgebra $\al(\A)$ may be naturally identified
with an open subset of $\widehat{\A}$, see e.g.
\cite[Thm~5.5.5]{Murphy}, and then the dual
$\widehat{\al}:\widehat{\al(\A)}\to \widehat{\A}$ to the isomorphism
$\al:\A\to \al(\A)$ becomes a partial homeomorphism of
$\widehat{\A}$. Under this identification one gets
$$
 \widehat{\al}=\B_1\dashind
$$
and hence if the partial system $(\widehat{\A},\widehat{\al})$ dual to
$(\A,\al)$ is topologically free, then $(\G,\RR)$ possess uniqueness
property.

\subsection{Cuntz-Krieger algebras} 
Let $\G=\{S_i: i=1,...,n\}$, where $n\geq 2$, and let $\RR$ consists
of the Cuntz-Krieger relations
\begin{equation}\label{Cuntz-Krieger algebras}
  S_i^*S_i =\sum_{j=1}^n A(i,j) S_jS_j^*,  \qquad S_i^* S_k=
  \delta_{i,k} S_i^*S_i, \qquad i,k=1 ,...,n,
\end{equation} 
where $\{A(i,j)\}$ is a given $n\times n$ zero-one matrix such that
every row and every column of $A$ is non-zero, and $\delta_{i,j}$ is
Kronecker symbol.  Then $C^*(\G,\RR)$ is the Cuntz-Krieger algebra
$\OO_A$ and the celebrated Cuntz-Krieger uniqueness theorem,
cf. \cite[Thm. 2.13]{CK}, states that the pair $(\G,\RR)$ possess
uniqueness property if and only if the so called \emph{condition (I)}
holds:
$$
\textbf{(I) } \,\,\,\,\, \begin{array}{l} \text{the space }
  X_A:=\{(x_1,x_2,....)\in\{1,...,n\}^\N: A(x_k,x_{k+1})=1\} \\
  \text{has no isolated points (considered with the product
    topology)}\end{array}
$$
We may recover this result applying our method to the standard circle
gauge action on $\OO_A$ determined by equations $ \gamma_\lambda(S_i)
=\lambda S_i$, $i=1,...,n$. Indeed, the fixed point $C^*$-algebra for
$\gamma$ coincides with the so called AF-core
$$
 \FF_A=\clsp\{S_\mu S^*_\nu : \vert\mu\vert=\vert\nu\vert =k, \, k=\,1,\,\dotsc\}
$$
where for a multiindex $\mu = (i_1,\dots,i_k)$, with $i_j \in 1,...,n$,
we denote by $\vert \mu \vert$ the length $k$ of $\mu$ and write
\,$S_\mu = S_{i_1}S_{i_2}\dotsm S_{i_k}$. Moreover, any faithful
representation of the Cuntz-Krieger relations \eqref{Cuntz-Krieger
  algebras} yields a faithful representation of $\FF_A$, that is
$(\G,\RR,\gamma)$ possess gauge-invariance uniqueness
property. Following \cite{Ant-Bakht-Leb} we put
$$
S:=\sum_{i,j}\frac{1}{ \sqrt{n_j}}\,\, S_i P_j
$$
where $n_j=\sum_{i=1}^n A(i,j)$ and $P_j=S_jS_j^*$,
$j=1,...,n$. %Then $S^*S=1$, that is $S$ is an isometry.
A routine computation shows that $S\FF_A S^*\subset \FF_A$, $S^*\FF_A
S\subset \FF_A$ and $S^*S=1$ ($S$ is an isometry). Hence the mapping
$\FF_A\ni a \mapsto \al(a):=SaS^* \in \FF_A$ is a monomorphism with a
hereditary range. It %$\al:\FF_A\to \FF_A$
is uniquely determined by the formula
\begin{equation}\label{endomorphism action on F_A}
  \al \Big(S_{i_2\mu} S^*_{j_2\nu}\Big)= \frac{1}{
    \sqrt{n_{i_2}n_{j_2}}} \sum_{i,j=1}^n S_{i\,i_2\mu}
  S^*_{j\,j_2\nu}.
\end{equation}
From the construction any representation of relations
\eqref{Cuntz-Krieger algebras} yields a representation of
$(\FF_A,\al)$ as introduced in the previous subsection. Conversely, if
$S$ satisfies \eqref{covariance conditions for monomorphism} where
$\A=\FF_A$, then one gets representation of \eqref{Cuntz-Krieger
  algebras} by putting $S_i:= \sum_{j=1}^n A(i,j)\sqrt{n_j} P_i S
P_j$. Thus we have a natural isomorphism
$$
\OO_A\cong \FF_A \rtimes_\al \N
$$ 
under which the introduced gauge actions coincide. Hence we may
identify the partial dynamical system dual to the Hilbert bimodule
$(\B_1, \B_0)$ where $\B_0=\FF_A$ and $\B_1=\FF_AS$ with the partial
dynamical system $(\widehat{\FF_A},\hal)$ dual to $(\FF_A,\al)$, as
introduced in the previous subsection.
\\
In order to identify the the topological freeness of $\hal$ we define
$\pi_\mu\in \widehat{\A}$ for any infinite path $\mu=(i_1,i_2,\dots)$,
$A(i_j,i_{j+1})=1$, $j\in\N$, to be the the GNS-representation
associated to the pure state $\omega_\mu:\FF_A\to \C$ determined by
the formula
\begin{equation}\label{product states equation}
  \omega_\mu(S_\nu S_\eta^*)=
\begin{cases}
  1 & \nu=\eta=(\mu_1,...,\mu_n) \\
  0 & \text{otherwise}
\end{cases}\qquad \text{for}\quad \vert\nu\vert=\vert\eta\vert =n.
\end{equation}
Using description of the ideal structure in $\FF_A$ in terms of
Bratteli diagrams \cite{Bratteli}, similarly as in \cite{kwa}, one can
show that representations $\pi_\mu$ form a dense subset of
$\widehat{\FF_A}$ and
$$
\widehat{\al}(\pi_{(\mu_1,\mu_{2},\mu_3,...)})=\pi_{(\mu_{2},\mu_3,...)},\qquad\text{for
  any }(\mu_1,\mu_{2},\mu_3,...).
$$
In particular, it follows that \emph{topological freeness of $\hal$ is
  equivalent to condition $(I)$}.  Accordingly
\begin{quote}
  \emph{our main result, Theorem \ref{main result}, when applied to
    Cuntz-Krieger relations is equivalent to the Cuntz-Krieger
    uniqueness theorem.}
\end{quote}


\begin{thebibliography}{10}

\bibitem{EL}
D.~E. Evans and J.~T. Lewis.
\newblock {\em Dilations of irreversible evolutions in algebraic quantum
  theory}.
\newblock Comm. {D}ublin {I}nst. Adv. Studies Ser. A, 24, 1977.

\bibitem{kwa-leb1}
B.~K. Kwa\'sniewski and A.~V. Lebedev.
\newblock Relative {C}untz-{P}imsner algebras, partial isometric crossed
  products and reduction of relations.
\newblock preprint, arXiv:0704.3811.

\bibitem{kwa-doplicher}
B.~K. Kwa\'sniewski.
\newblock ${C}^*$-algebras generalizing both relative {C}untz-{P}imsner and
  {D}oplicher-{R}oberts algebras.
\newblock preprint arXiv:0906.4382, accepted in Trans. Amer. Math. Soc.

\bibitem{blackadar}
B.~Blackadar.
\newblock Shape theory for ${C}^*$-algebras.
\newblock {\em Math. Scand.}, 56(2):249--275, 1985.

\bibitem{exel1}
R.~Exel.
\newblock Circle actions on ${C}^*$-algebras, partial automorphisms and
  generalized {P}imsner-{V}oiculescu exact sequence.
\newblock {\em J. Funct. Analysis}, 122:361--401, 1994.

\bibitem{dr}
S.~Doplicher and J.~E. Roberts.
\newblock A new duality theory for compact groups.
\newblock {\em Invent. Math.}, 98:157--218, 1989.

\bibitem{BMS}
L.G. Brown, J.~Mingo, and N.~Shen.
\newblock Quasi-multipliers and embeddings of {H}ilbert ${C}^*$-modules.
\newblock {\em Canad. J. Math.}, 71:1150--1174, 1994.

\bibitem{aee}
B.~Abadie, S.~Eilers, and R.~Exel.
\newblock Morita equivalence for crossed products by {H}ilbert
  ${C}^*$-bimodules.
\newblock {\em Trans. Amer. Math. Soc.}, 350:3043--3054, 1998.

\bibitem{morita}
I.~Raeburn and D.~P. Williams.
\newblock {\em Morita equivalence and continuous-trace ${C}^*$-algebras}.
\newblock Amer. Math. Soc., Providence, 1998.

\bibitem{kwa}
B.~K. Kwa\'sniewski.
\newblock Cuntz-{K}rieger uniqueness theorem for crossed products by {H}ilbert
  bimodules.
\newblock preprint arXiv:1010.0446.

\bibitem{Anton_Lebed}
A.~B. Antonevich and A.~V. Lebedev.
\newblock {\em Functional differential equations: I. ${C}^*$-theory}.
\newblock Longman Scientific \& Technical, Harlow, Essex, England, 1994.

\bibitem{Ant-Bakht-Leb}
A.~B. Antonevich, V.~I. Bakhtin, and A.~V. Lebedev.
\newblock Crossed product of ${C}^*$-algebra by an endomorphism, coefficient
  algebras and transfer operators.
\newblock {\em Sb. Math.}, 202(9):1253--1283, 2011.

\bibitem{Murphy}
G.~J. Murphy.
\newblock {\em $C^*$-Algebras and operator theory}.
\newblock Academic Press, Boston, 1990.

\bibitem{CK}
J.~Cuntz and W.~Krieger.
\newblock A class of ${C}^*$-algebras and topological markov chains.
\newblock {\em Inventiones Math.}, 56:251--268, 1980.

\bibitem{Bratteli}
O.~Bratteli.
\newblock Inductive limits of finite dimensional ${C}^*$-algebras.
\newblock {\em Trans. Amer. Math. Soc.}, 171:195--234, 1972.

\end{thebibliography}
\end{document}